\newcommand{\A}{\mathcal{H }} 
\theoremstyle{definition}
\newtheorem{thm}{Theorem}
\newtheorem{lem}[thm]{Lemma}
\newtheorem{defn}[thm]{Definition}
\newtheorem{conj}[thm]{Conjecture}
\title{Large Matchings with Few Colors}
\author{Neal Bushaw$^\dag$}
\author{Peter Csorba$^\ddag$}
\author{Lindsay Erickson$^\diamond$}
\author{D\'aniel Gerbner$^\ddag$}
\author{Diana Piguet$^\S$}
\author{Ago-Erik Riet$^\dag$}
\author{Tam\'as Terpai$^\ddag$}
\author{Dominik K. Vu$^\dag$}
\address{$^\dag$University of Memphis}
\address{$^\ddag$R\'enyi Institute}
\address{$^\S$Charles University}
\address{$^\diamond$North Dakota State University} 
\thanks{The problems contained in this paper were presented to the authors by Andr\'as Gy\'arf\'as at the first Eml\'ekt\'abla Workshop put on by the J\'anos Bolyai Mathematical Society and the Alfr\'ed R\'enyi Institute of Mathematics. The work contained in this paper was done over the next several days of the work shop. We are grateful to the organizers of this workshop for their time and effort, and in particular, to Andr\'as Gy\'arf\'as for his insightful comments.}
\subjclass[2000]{Primary 05C15, 05C65}
\keywords{Hypergraph, Matching, Coloring}
\begin{document}

\begin{abstract}
Let $K_n^r$ denote the complete $r$-uniform hypergraph on $n$
vertices.  A matching $M$ in a hypergraph is a set of pairwise
vertex disjoint edges, and an $s$-colored matching is a matching using edges from at most $s$ colors.  Recent Ramsey-type results rely on lemmas
about the size of monochromatic matchings in hypergraphs.  A
starting point for this study comes from a well-known result of
Alon, Frankl, and Lov\'asz~\ref{thm:AFL}, which states that any edge-coloring of the complete $r$-uniform hypergraph on $n$ vertices with $t$
colors contains a monochromatic matching of size $k$.  A natural extension of this theorem would
find the smallest $n$ such that every
$t$-coloring of $K_n^r$ contains an $s$-colored matching of size
$k$.  It has been conjectured that in every coloring of the edges of
$K_n^r$ with 3 colors there is a 2-colored matching of size at least
$k$ provided that $n\geq
kr+\left\lfloor{\frac{k-1}{r+1}}\right\rfloor$.  The smallest
non-trivial case is when $r=3$ and $k=4$.  We prove that in every
$3$-coloring of the edges of $K_{12}^3$ there is a $2$-colored
matching of size 4.
\end{abstract}
\maketitle

\section*{Introduction}

Throughout this paper we will rely on the following result of Alon,
Frankl, and Lov\'asz\cite{AFL}, solving a conjecture of Erd\H{o}s
from 1978\cite{ERD}.

\begin{thm}[\cite{AFL}]\label{thm:AFL}
Suppose that $n=(t-1)(k-1)+kr$. Then any edge-coloring with $t$ colors of the
complete $r$-uniform hypergraph on $n$ vertices, $K_n^r$,  induces a monochromatic matching of size at least $k$.
\end{thm}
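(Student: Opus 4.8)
This is the theorem of Alon, Frankl, and Lov\'asz, equivalent to the 1978 conjecture of Erd\H{o}s; the stated $n$ is smallest possible (there is a simple explicit $t$-coloring of $K_{n-1}^r$ with no monochromatic $k$-matching --- set aside $t-1$ blocks of $k-1$ vertices, put the remaining $kr-1$ vertices in one pile, and color an $r$-edge by the first block it meets, or by color $t$ if it lies inside the pile), so the content is the ``existence'' direction, which I would attack topologically, generalizing Lov\'asz's Borsuk--Ulam proof of the Kneser conjecture (the case $k=2$) from $\mathbb{Z}_2$ to $\mathbb{Z}_p$.

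Suppose for contradiction that $c\colon\binom{[n]}{r}\to[t]$ is a $t$-coloring of $K_n^r$ with $n=(t-1)(k-1)+kr$ and no monochromatic $k$-matching; I will describe the case $k=p$ prime, which already carries the essential idea (composite $k$ needs extra bookkeeping). Place $n$ points $x_1,\dots,x_n$ in general position on a free $\mathbb{Z}_p$-sphere $E$ --- a sphere in a sum of non-trivial $\mathbb{Z}_p$-representations --- whose dimension is chosen so that the counting below lands exactly on $n=(t-1)(p-1)+pr$. To each $z\in E$ attach $p$ pairwise disjoint open ``sectors'' $H_0(z),\dots,H_{p-1}(z)$, cyclically permuted by the action (for $p=2$, the two open hemispheres centered at $z$ and $-z$). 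Cover $E$ by $t+1$ open, $\mathbb{Z}_p$-invariant sets $A_0,A_1,\dots,A_t$: put $z\in A_0$ when some sector $H_\ell(z)$ contains fewer than $r$ of the $x_j$, and for $i\in[t]$ put $z\in A_i$ when \emph{every} sector $H_\ell(z)$ contains an $r$-subset of the $x_j$ receiving color $i$ (slightly fattened to be open).

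The key claim is that no $A_i$ contains a full $\mathbb{Z}_p$-orbit $\{z,\sigma z,\dots,\sigma^{p-1}z\}$. If $i\ge 1$ and such an orbit lay in $A_i$, choosing a color-$i$ $r$-edge inside each of the $p$ pairwise disjoint sectors $H_0(z),\dots,H_{p-1}(z)$ would yield $k=p$ pairwise disjoint color-$i$ edges, a monochromatic $k$-matching, contrary to hypothesis. If such an orbit lay in $A_0$, the $p$ sectors jointly contain at most $p(r-1)$ of the $x_j$, forcing at least $n-p(r-1)$ of them onto a ``central'' subspace of too small a dimension, contradicting general position. But a free $\mathbb{Z}_p$-space of this dimension \emph{cannot} be covered by $t+1$ open sets none of which contains a full orbit: this is the $\mathbb{Z}_p$-Borsuk--Ulam theorem (equivalently Dold's theorem forbidding certain equivariant maps, or the $\mathbb{Z}_p$-Tucker lemma; for $p=2$ it is the Lusternik--Schnirelmann--Borsuk theorem Lov\'asz used). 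The dimension of $E$ is forced from below by ``general position must succeed'' and from above by ``the obstruction must bite'', and the two requirements meet precisely at $n=(t-1)(k-1)+kr$, which proves the theorem for prime $k$.

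The step I expect to be the real obstacle is this equivariant-topological core: selecting the $\mathbb{Z}_p$-representation and the sector decomposition so that the two dimension inequalities coincide \emph{exactly} at $(t-1)(k-1)+kr$ rather than at a weaker bound, verifying the cover is genuinely open and $\mathbb{Z}_p$-invariant after fattening, and invoking the precise form of the $\mathbb{Z}_p$-Borsuk--Ulam/Dold theorem --- together with the separate handling of composite $k$.
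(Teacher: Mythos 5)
This statement is not proved in the paper at all: it is Theorem~\ref{thm:AFL}, quoted as a black box from Alon, Frankl, and Lov\'asz \cite{AFL}, so there is no in-paper argument to compare against. Judged on its own, your proposal correctly identifies the character of the original proof (a topological argument generalizing Lov\'asz's Borsuk--Ulam proof of the Kneser conjecture, handled first for prime matching size and then extended to composite $k$), and your sharpness example is fine, but as a proof it has genuine gaps at exactly the points you flag. The central device --- attaching to each point $z$ of a free $\mathbb{Z}_p$-sphere $p$ pairwise disjoint open ``sectors'' cyclically permuted by the action, so that general position plus a dimension count lands precisely on $n=(t-1)(k-1)+kr$ --- is asserted, not constructed. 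For $p=2$ the two hemispheres do this, but for $p>2$ there is no canonical such decomposition of a sphere in a sum of nontrivial $\mathbb{Z}_p$-representations, and the known proofs do not proceed this way: they replace the geometric picture by combinatorial or simplicial equivariant machinery (deleted joins, Sarkaria-type arguments, the $\mathbb{Z}_p$-Tucker lemma, or Dold's theorem applied to an appropriately built complex), and getting the dimension bookkeeping to close at the exact value $(t-1)(k-1)+kr$ rather than a weaker bound is the technical heart of \cite{AFL}. Without that construction, the claim that ``the two requirements meet precisely at $n=(t-1)(k-1)+kr$'' is unsubstantiated.

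The second gap is the composite case. In \cite{AFL} the passage from prime $k$ to general $k$ is not bookkeeping but a separate combinatorial argument (a reduction showing the bound behaves multiplicatively, so that validity for $k_1$ and $k_2$ yields it for $k_1k_2$), and the theorem as stated here is for all $k$. So your outline is a reasonable road map to the literature, but it is not yet a proof; since the paper only cites the result, the appropriate course is either to cite \cite{AFL} as the paper does or to carry out the equivariant construction and the composite reduction in full.
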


Extending this idea a bit further, Gy\'arf\'as, S\'ark\"ozy, and
Selkow\cite{GSS} introduced an extra parameter
$s\in\{1,2,\ldots,t\}$.  A matching in a hypergraph which uses edges
from at most $s$ colors is called an \emph{s-colored matching}.  It
is natural to ask what is the smallest $n$ such that every $t$-edge
coloring of $K_n^r$ contains a $s$-colored matching of size $k$?
\cite{GY} states the following conjecture for certain values of $1\le s \le
t$ (in particular for $t=3, \ s=2$).

\begin{conj}[\cite{GY}]\label{conj:GSS}
Every $t$-coloring of the edges of $K_n^r$ (a \emph{t-edge-coloring}) contains an $s$-colored matching of size $k$ provided that $n\geq kr+\left\lfloor{\frac{\left(k-1\right)\left(t-s\right)}{1+r+r^2+\cdots r^{s-1}}}\right\rfloor$.  (We also note here that the case $s=1$ is \ref{thm:AFL}; the case $s=t$ is trivial.)
\end{conj}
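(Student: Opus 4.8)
The plan is to prove the bound $n\ge kr+\lfloor (k-1)(t-s)/D_s\rfloor$, where I abbreviate $D_s=1+r+r^2+\cdots+r^{s-1}=\tfrac{r^s-1}{r-1}$, by induction on the number $s$ of permitted colors, using \ref{thm:AFL} both as the base case ($s=1$, where $D_1=1$) and as the engine of the inductive step; the opposite end $s=t$ is the trivial perfect-matching case noted in the statement. It is convenient to argue contrapositively: I assume a $t$-coloring of $K_n^r$ with no $s$-colored matching of size $k$ and bound $n$ from above by $kr-1+\lfloor (k-1)(t-s)/D_s\rfloor$. The starting observation is that avoiding an $s$-colored matching of size $k$ is exactly the statement that for every $s$-element set $S$ of colors the sub-hypergraph $H_S$ formed by the edges colored from $S$ has matching number $\nu(H_S)\le k-1$; since the vertex set of a maximal matching in an $r$-uniform hypergraph is a cover, each such $H_S$ then admits a vertex cover of size at most $r(k-1)$.

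First I would try the most direct reduction, a ``peel-and-merge'' step. Apply \ref{thm:AFL} to the full $t$-coloring to extract a monochromatic matching $M_1$ of size $a$ in some color $c_1$, delete its $ar$ vertices, recolor every surviving edge of color $c_1$ with a fixed second color, and apply the induction hypothesis to the resulting $(t-1)$-coloring with parameters $(t-1,s-1,k-a)$ on the remaining vertices. The union of $M_1$ with the $(s-1)$-colored matching produced by induction is a matching of size $k$, and it is genuinely $s$-colored because every edge the recursion selects in the merged color is in truth of color $c_1$ or of the color it was merged into, so at most $s$ true colors appear. Balancing the two vertex demands forces a split with $k-a-1\le (k-1)D_{s-1}/D_s\approx (k-1)/r$, that is $a\approx (k-1)(r-1)/r$; this is exactly the relation that, through the recurrence $D_s=1+rD_{s-1}$, converts the induction hypothesis into the claimed floor.

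The hard part — and the reason I expect the full conjecture to resist this approach — is the overhead of \ref{thm:AFL}. Extracting a size-$a$ monochromatic matching from a $t$-coloring costs $(t-1)(a-1)$ ``blocker'' vertices, and summing this cost over the $s$ rounds (with geometrically shrinking targets $a_j\approx (k-1)(r-1)/r^{j}$, so $\sum_j a_j\approx k-1$) produces a term of order $t\,(k-1)$ rather than the conjectured $t\,(k-1)/D_s$: the blockers accumulate linearly in $t$ instead of being discounted by the geometric factor $D_s$. Thus naive peeling is too wasteful by precisely the factor the conjecture promises to save. The geometric denominator can arise only if a single blocker vertex is charged simultaneously against many of the $\binom{t}{s}$ color combinations — an $r$-edge through a blocker should be reusable $r$-fold as one descends one level in the color hierarchy, which is the combinatorial meaning of $D_s=1+rD_{s-1}$.

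Accordingly, the decisive step is to replace the round-by-round extraction with a single global argument on the covers $C_S$. The aim would be to show that the at most $r(k-1)$-element covers guaranteed above can be organized into one nested family of a ``free'' part of about $kr-1$ vertices carrying no large monochromatic matching, together with at most $\lfloor (k-1)(t-s)/D_s\rfloor$ \emph{shared} blockers counted against all color combinations at once, so that the total vertex count is bounded as claimed. Establishing this sharing at the exact geometric rate is an extremal set-system problem with no clean K\"onig-type analogue for hypergraphs, and it is where essentially all the difficulty concentrates; this is consistent with only the minimal nontrivial instance $r=3$, $t=3$, $s=2$, $k=4$ (the case $n=12$) being established directly, with the general statement resolved in full only if the shared-blocker bookkeeping can be pushed through.
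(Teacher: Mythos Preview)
The statement you are attempting to prove is labeled as a \emph{conjecture} in the paper, not a theorem; the paper offers no proof of it. What the paper actually establishes is the single minimal open instance $t=3$, $s=2$, $r=3$, $k=4$ (Theorem~\ref{thm:main1}), together with the two next cases $k=5,6$ (Theorems~\ref{thm:main2} and~\ref{thm:main3}), all by ad hoc structural arguments on $K_{12}^3$, $K_{16}^3$, and $K_{19}^3$. There is therefore no ``paper's own proof'' of Conjecture~\ref{conj:GSS} to compare your proposal against.

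Your write-up is not a proof either, and to your credit you say so explicitly. You correctly observe that the naive peel-and-merge induction, powered by Theorem~\ref{thm:AFL}, overpays for blocker vertices by a factor of roughly $D_s$, producing a bound of order $kr+t(k-1)$ rather than $kr+(t-s)(k-1)/D_s$. Your proposed fix --- organizing the covers $C_S$ into a nested family with at most $\lfloor (k-1)(t-s)/D_s\rfloor$ globally shared blockers --- is stated only as an aim; you give no mechanism for producing such a family, and you note yourself that there is no K\"onig-type theorem available for $r$-uniform hypergraphs to do the work. That is exactly the gap: the ``shared-blocker bookkeeping'' is the entire content of the conjecture, and nothing in the proposal addresses it beyond naming the shape a solution would need to take. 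As it stands, your document is a correct diagnosis of why the obvious induction fails, together with a plausible but unexecuted outline of what a proof would have to accomplish; it is not a proof, and the paper does not supply one either.
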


The case of $s=t-1, \ r=2$ is solved in \cite{GSS}. The smallest open case of \ref{conj:GSS}  is
$t=3$, $s=2$, $r=3$, $k=4$.  With this in mind, our main result is the following:

\begin{thm}\label{thm:main1}Every 3-edge-coloring of the complete 3-uniform hypergraph on 12 vertices, $K_{12}^{3}$, contains a perfect 2-colored matching.\end{thm}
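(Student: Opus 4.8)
The plan is to argue by contradiction: suppose some $3$-edge-coloring of $K_{12}^{3}$, with color set $\{1,2,3\}$, has no perfect $2$-colored matching, that is, no four pairwise disjoint edges together using at most two colors. A perfect matching of $K_{12}^{3}$ has exactly four edges, so the hypothesis forces every perfect matching to use all three colors, and on four edges the only color pattern realizing three colors is $(2,1,1)$. Thus every perfect matching contains exactly one pair of disjoint equicolored edges; call such a pair (six vertices, one color) a \emph{mono-pair}.

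First I would run a short double count to pin down a mono-pair with a rigid complement. Counting incidences $(M,\{e,f\})$ with $M$ a perfect matching and $\{e,f\}$ its unique mono-pair: there are $12!/(6^{4}\cdot 4!)=15400$ perfect matchings, each contributing one incidence, while a fixed mono-pair occupies six vertices and extends to a perfect matching by partitioning the remaining six into two triples in $\tfrac12\binom{6}{3}=10$ ways. Hence $K_{12}^{3}$ has exactly $1540$ mono-pairs; in particular one exists, say $\{e,f\}$ of color $1$, with complementary six-set $W$. If some partition of $W$ into two triples failed to consist of one color-$2$ triple and one color-$3$ triple, then appending that partition to $\{e,f\}$ would give a perfect matching with at most two colors; so every partition of $W$ is one color-$2$ triple plus one color-$3$ triple, i.e.\ all $20$ triples of $W$ are colored in $\{2,3\}$ with complementary triples differently colored. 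Write a generic partition of $W$ as $\{A,B\}$, $A$ of color $2$, $B$ of color $3$. Running the same move on other six-sets --- completing $e\cup f$ by a partition of $W$, completing $e\cup A$ by $\{f,B\}$, completing $e\cup B$ by $\{f,A\}$, completing $f\cup A$ by $\{e,B\}$, completing $f\cup B$ by $\{e,A\}$ --- yields: no two disjoint triples of $e\cup f$ both avoid color $1$; every partition of $e\cup A$ contains a color-$2$ triple; every partition of $e\cup B$ contains a color-$3$ triple; every partition of $f\cup A$ contains a color-$2$ triple; and every partition of $f\cup B$ contains a color-$3$ triple.

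Each such statement of the form ``every partition of a six-set into two triples contains a triple of color $c$'' forces at least $10$ of that six-set's $20$ triples to have color $c$ (the triples pair up into $10$ complementary pairs, each needing a color-$c$ member). Ranging over all $10$ partitions $\{A,B\}$ of $W$ then over-constrains how color-$2$ and color-$3$ triples meet $e$, $f$, and $W$; combining these forced counts with $\sum_{i}N_{i}=1540$ ($N_{i}$ the number of mono-pairs of color $i$) and with the rather limited structure of $W$, one should reach a numerical contradiction.

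The main obstacle is exactly this last step --- converting the accumulated local color constraints into a clean global contradiction without a case explosion. Two things need care. First, one should use the freedom in choosing \emph{which} mono-pair to start from (the count guarantees $1540$ of them) so that the resulting rigid six-sets overlap enough to bound the number of surviving configurations. Second, the double count only delivers a mono-pair of \emph{some} color; in the regime where a color, say $2$, has $N_{2}=0$, the color-$2$ edges form an intersecting family of $3$-sets on $12$ vertices, hence have size at most $\binom{11}{2}$ and a restricted shape (essentially a pencil, or a family inside a small vertex set), and that regime must be dealt with as a separate, finite batch of cases. As an alternative starting point, should the counting prove unwieldy, one can instead take a $2$-colored matching of size $3$ --- available on any $9$ of the $12$ vertices by the known case $k=3$ of Conjecture~\ref{conj:GSS} --- note that the uncovered triple must carry the third color, and run the same repartition/swap analysis; either way the crux remains the completion of the case check.
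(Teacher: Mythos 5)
There is a genuine gap, and you name it yourself: the argument stops exactly where the real work begins. The double count (15400 perfect matchings, each with a unique ``mono-pair,'' hence 1540 mono-pairs) and the observation that the complement $W$ of a mono-pair must have every partition into two triples consist of one color-$2$ and one color-$3$ triple are correct, and the latter is essentially the paper's Lemmas~\ref{lem:A-set} and~\ref{lem:partition}. But the step ``ranging over all $10$ partitions of $W$ \dots\ one should reach a numerical contradiction'' is not a proof, and it is unlikely to become one by counting alone: the local constraints you list (each relevant six-set has at least one triple of a prescribed color in every partition) are satisfied, or very nearly satisfied, by quite rich colorings. Indeed, the paper shows that any putative counterexample must have the very rigid ``disk'' structure --- three monochromatic complete $K_6^3$'s in the three colors, pairwise meeting in two vertices --- and such a configuration is perfectly consistent with all the partition-type constraints you derive; it is only killed by a further, more delicate argument (a rainbow $K_5^3$ inside the disk, Lemma~\ref{lem:disk-Rainbow}, versus the nonexistence of rainbow $K_5^3$'s, Lemma~\ref{lem:NoRainbow}, which in turn rests on Lemma~\ref{lem:NoB+-set}). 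So the ``numerical contradiction'' you hope for does not fall out of the counts; one needs structural lemmas of the type the paper proves: no rainbow $K_5^3$ (Lemma~\ref{lem:NoRainbow}), every pair of vertices lies in a monochromatic $K_6^3$ (Lemma~\ref{lem:edge-complete}, which uses Theorem~\ref{thm:AFL} on an auxiliary graph coloring), the intersection pattern of such cliques (Lemmas~\ref{lem:same-colored} and~\ref{lem:intersection-complete}), and finally the disk structure (Lemma~\ref{lem:disk}).

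Two smaller points. The existence of a mono-pair does not need the $15400/10$ count: Theorem~\ref{thm:AFL} with $t=3$, $k=2$, $r=3$ already gives a monochromatic matching of size $2$ on any $8$ vertices, so the counting adds little beyond what is needed. And the side case you flag ($N_2=0$, color $2$ intersecting) is another unfinished branch rather than a separate ``finite batch'' you have handled; in the paper's approach no such case split is needed because the argument never requires mono-pairs in more than one color. As it stands, your text is a plausible plan whose crux --- converting the local partition constraints into a contradiction --- is exactly the content of the paper's chain of lemmas and is missing here.
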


\section*{Proof of Main Result}

Throughout the following, we assume that  $\mathcal{H}$ is a 3-edge-colored $K_{12}^3$ without a 2-colored matching of size 4.

\begin{defn} A set of 6 vertices from $\mathcal{H}$ is called a \emph{$B$-set} if one color is avoided by all edges induced by these vertices and no disjoint pair of edges is monochromatic.\end{defn}

\begin{defn}
Call a set of 7 vertices a $B^+$ set if every 6-vertex subset chosen out of it is a $B$-set avoiding the same color.
\end{defn}

\begin{lem}\label{lem:NoB+-set}
$\mathcal{H}$ contains no $B^+$ set.
\end{lem}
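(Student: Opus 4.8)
The plan is to derive a contradiction from the existence of a $B^+$ set by combining the structural restriction forced by the $B^+$ condition with Theorem~\ref{thm:AFL}.

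First I would observe that a $B^+$ set is genuinely a $2$-colored object. Suppose $S$ is a $B^+$ set in $\mathcal{H}$, and say every $6$-element subset of $S$ avoids color $3$. Any edge $e$ of $\mathcal{H}$ with $e\subseteq S$ lies in at least one $6$-subset of $S$ (in fact in exactly $\binom{4}{3}=4$ of them, since $|S|=7$ and $|e|=3$), and that $6$-subset, being a $B$-set avoiding color $3$, induces no edge of color $3$; hence $e$ has color $1$ or $2$. Thus all $\binom{7}{3}=35$ edges spanned by $S$ use only colors $1$ and $2$, so the restriction $\mathcal{H}[S]$ is a $2$-edge-coloring of $K_7^3$.

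Next I would apply Theorem~\ref{thm:AFL} with $r=3$, $t=2$, $k=2$: since $(t-1)(k-1)+kr=1+6=7=|S|$, the $2$-edge-coloring $\mathcal{H}[S]$ of $K_7^3$ contains a monochromatic matching $\{e_1,e_2\}$ of size $2$. The six vertices of $e_1\cup e_2$ form a $6$-element subset of $S$, so by the definition of a $B^+$ set this subset is a $B$-set; but $\{e_1,e_2\}$ is a disjoint pair of edges of the same color inside it, contradicting the requirement that a $B$-set contain no monochromatic disjoint pair of edges. This contradiction proves that $\mathcal{H}$ has no $B^+$ set.

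There is no real obstacle here once the two observations above are in place; the only point requiring care is checking that the $B^+$ hypothesis genuinely propagates a single forbidden color to \emph{every} edge spanned by the $7$ vertices (different $6$-subsets avoiding different colors is exactly what the clause ``avoiding the same color'' rules out), after which the parameters fit the Alon--Frankl--Lov\'asz threshold exactly. It is worth noting that this lemma does not use the standing assumption that $\mathcal{H}$ has no $2$-colored matching of size $4$.
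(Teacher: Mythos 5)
Your proof is correct, but it takes a different route from the paper. You first note that the $B^+$ condition forces all $\binom{7}{3}$ edges spanned by the $7$ vertices to use only two colors (this observation is also implicit in the paper), and then you invoke Theorem~\ref{thm:AFL} with $r=3$, $t=2$, $k=2$, where the threshold $(t-1)(k-1)+kr=7$ fits exactly, to extract a monochromatic matching of size $2$; its six vertices form a $6$-subset that should be a $B$-set, giving the contradiction. The paper instead argues by hand: it fixes a monochromatic pair of disjoint edges (which must exist and must have opposite colors inside any $6$-subset), then rotates through the seven vertices, propagating forced colors $c_1,c_2$ around the cycle until two disjoint edges of the same color appear. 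Your version is shorter and delegates the combinatorics to a cited theorem, at the cost of using the full Alon--Frankl--Lov\'asz result for a case that admits an elementary verification (indeed, for $t=2$, $k=2$ even Erd\H{o}s--Ko--Rado suffices: two intersecting families of triples in $[7]$ cover at most $2\cdot 15 < 35$ edges); the paper's rotation argument is self-contained and keeps this lemma independent of Theorem~\ref{thm:AFL}. Your closing remarks --- that the $B^+$ hypothesis is exactly what rules out different $6$-subsets avoiding different colors, and that the standing assumption on $\mathcal{H}$ is not needed here --- are both accurate.
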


\begin{proof}[Proof of Lemma~\ref{lem:NoB+-set}]
Suppose that we have a $B^+$-set with vertex-set $\{v_1,v_2,\dots,v_7\}$ with $\binom{B}{[3]}$ colored $c_1$ and $c_2$. Assume without loss of generality that $\{v_1,v_2,v_3\}$ is $c_1$, and so $\{v_4,v_5,v_6\}$ must be $c_2$. Replace one vertex of one of the two matched edges by the left-over vertex.  Then $\{v_5,v_6,v_7\}$ is $c_2$, $\{v_2,v_3,v_4\}$ is $c_1$, $\{v_6,v_7,v_1\}$ is $c_2$,  and so $\{v_3,v_4,v_5\}$ is  also $c_1$, a contradiction.
\end{proof}

\begin{defn}
 Call $A\subseteq V(H)$, $|A|=6$ an \emph{$A$-set}, if $A$ contains a monochromatic perfect matching.
\end{defn}

\begin{lem}\label{lem:A-set}
 The complement of an $A$-set is a $B$-set.
\end{lem}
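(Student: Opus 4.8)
The plan is to use the standing hypothesis that $\mathcal{H}$ has no $2$-colored matching of size $4$, together with the observation that an $A$-set already supplies two disjoint edges of a single color for free. Let $A$ be the $A$-set in question and set $B := V(\mathcal{H})\setminus A$, which has $6$ vertices since $|V(\mathcal{H})|=12$. By the definition of an $A$-set there are disjoint edges $e_1,e_2$ with $e_1\cup e_2 = A$, both of some color $c_1$. We then need to verify the two defining properties of a $B$-set for $B$.

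\textbf{$B$ avoids a color.} I would show that $B$ avoids $c_1$. If not, some triple $T\subseteq B$ is colored $c_1$; since $K_{12}^3$ is complete, $B\setminus T$ is also an edge, of some color. Then $e_1,e_2,T,B\setminus T$ are four pairwise disjoint edges — the first two lying inside $A$, the last two inside $B$ — spanning at most the two colors $c_1$ and the color of $B\setminus T$. That is a $2$-colored matching of size $4$, contradicting our assumption. Hence no edge induced by $B$ is colored $c_1$.

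\textbf{$B$ has no monochromatic disjoint pair.} Suppose $f_1,f_2\subseteq B$ are disjoint edges of a common color $c$. By the previous step $c\neq c_1$. Then $e_1,e_2,f_1,f_2$ are four pairwise disjoint edges using only the colors $c_1$ and $c$ — again a forbidden $2$-colored matching of size $4$. So no such pair exists, and therefore $B$ is a $B$-set.

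I expect essentially no obstacle here: the argument is a direct exploitation of the forbidden-matching hypothesis, and the only care needed is the trivial bookkeeping that the four edges produced are genuinely pairwise disjoint and span at most two colors. It is worth recording, for when these lemmas are combined later, that the color $B$ avoids is forced to be $c_1$, the color of the monochromatic perfect matching living inside $A$.
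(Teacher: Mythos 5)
Your proof is correct and follows essentially the same route as the paper: combine the monochromatic matching inside $A$ with any offending pair of edges inside $B$ to produce a forbidden $2$-colored matching of size $4$, forcing $B$ to avoid the color of $A$'s matching and to have no monochromatic disjoint pair. The only difference is presentational — you check the two defining properties of a $B$-set separately, while the paper phrases both through perfect matchings of $B$ — so there is nothing to add.
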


\begin{proof}[Proof of Lemma~\ref{lem:A-set}]
 Let $A$ be an $A$-set containing a monochromatic perfect matching $M_1$ of color $c_1$. Observe that if the complement $B:=V(\mathcal H)\setminus A$ is an $A$-set, then $\mathcal H$ contains a $2$-colored perfect matching consisting of $M_1$ and the monochromatic perfect matching in $B$. So any perfect matching in $B$ is $2$-colored, and does not contain the color $c_1$, otherwise again it would induce together with $M_1$ a $2$-colored perfect matching in $\mathcal H$. This implies that $B$ is a $B$-set in colors $c_2$ and $c_3$.
\end{proof}


\begin{lem}\label{lem:partition} In any balanced partition of $V(\mathcal H) = Y \dot{\cup} Z$, at least one of the partition classes is a $B$-set.
\end{lem}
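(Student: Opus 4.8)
The plan is to argue by contradiction: suppose that in some balanced partition $V(\mathcal H) = Y \,\dot{\cup}\, Z$ neither $Y$ nor $Z$ is a $B$-set, and produce a $2$-colored matching of size $4$, contradicting our standing assumption on $\mathcal H$. Since $|Y| = |Z| = 6$, a perfect matching of (the subhypergraph induced by) $Y$ is simply a pair of disjoint edges covering $Y$, and the union of a perfect matching of $Y$ with a perfect matching of $Z$ is a perfect matching of $\mathcal H$, that is, a matching of size $4$. So it suffices to find perfect matchings $M_Y$ of $Y$ and $M_Z$ of $Z$ that together use at most two colors.

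First I would dispose of the case where one of the classes, say $Y$, is an $A$-set: then by Lemma~\ref{lem:A-set} its complement $Z$ is a $B$-set, contradicting our assumption. So from now on neither $Y$ nor $Z$ contains a monochromatic perfect matching. Next, observe that a $6$-set that uses only two of the three colors and contains no monochromatic pair of disjoint edges is, by definition, a $B$-set; hence, since $Y$ and $Z$ are neither $B$-sets nor $A$-sets, each of them must use all three colors.

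The key step is the following claim: for each of $Y$ and $Z$, the set of colors that are avoided by at least one of its perfect matchings has size at least $2$. To see this for $Y$, note first that every perfect matching of $Y$ is bichromatic — it cannot be monochromatic since $Y$ is not an $A$-set — and so avoids exactly one color. If only one color, say $c_1$, were avoidable in this sense, then no perfect matching of $Y$ would contain an edge of color $c_1$; but $Y$ uses color $c_1$, so it has an edge $e$ of color $c_1$, and $\{e,\, Y\setminus e\}$ is then a perfect matching of $Y$ containing $e$ — a contradiction. Hence at least two colors are avoidable in $Y$, and likewise in $Z$.

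Finally, since the avoidable colors of $Y$ and those of $Z$ form two subsets of a $3$-element set, each of size at least $2$, they share a common color $c$. Choosing a perfect matching $M_Y$ of $Y$ and a perfect matching $M_Z$ of $Z$ each avoiding $c$, the matching $M_Y \cup M_Z$ is a perfect matching of $\mathcal H$ using only the two colors different from $c$, i.e.\ a $2$-colored matching of size $4$ — the desired contradiction. I expect the only delicate point to be the key step, namely ruling out that all perfect matchings of a class avoid one and the same color; the rest is bookkeeping with the definitions of $A$-set and $B$-set.
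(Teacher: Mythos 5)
Your proof is correct and takes essentially the same route as the paper: dispose of the $A$-set case via Lemma~\ref{lem:A-set}, use that every perfect matching of a non-$A$-set class is bichromatic and that every induced edge lies in some perfect matching of its class, and then combine perfect matchings of $Y$ and $Z$ avoiding a common color to contradict the assumption on $\mathcal H$. The only difference is presentational: the paper argues asymmetrically (from $Y$ not being a $B$-set it forces every perfect matching of $Z$ to use exactly the colors $c_1,c_3$, so $Z$ is a $B$-set), whereas you run a symmetric pigeonhole on ``avoidable colors,'' and in doing so you actually spell out the justification the paper leaves terse for why a class that is neither an $A$-set nor a $B$-set has perfect matchings avoiding two different colors.
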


\begin{proof}[Proof of Lemma~\ref{lem:partition}]
We may assume that neither $Y$ nor $Z$ is an $A$-set, otherwise by Lemma~\ref{lem:A-set} the complement is a $B$-set.
Suppose that $Y$ is not a $B$-set. Then there is a $2$-colored matching $M_1$ in $Y$, say in $c_1$ and $c_2$, and another $2$-colored matching~$M_2$ in $Y$, say in $c_2$ and $c_3$. If $Z$ has a matching~$M_3$ with the same colors as~$M_1$ or as~$M_2$, the matchings $M=M_3\dot \cup M_1$, or $M=M_3\dot \cup M_2$, respectively, induces a 2-colored perfect matching in $\mathcal H$. This implies that the edges of any matching $M_3$ in $Z$  have two distinct colors, $c_1$ and $c_3$. Thus~$Z$ is a $B$-set.
\end{proof}

\begin{lem}\label{lem:NoRainbow} $\A$ contains no $K_5^3$ using all of $\{c_1, c_2, c_3\}$. \end{lem}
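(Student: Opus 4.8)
The plan is to argue by contradiction. Suppose $S\subseteq V(\mathcal H)$ with $|S|=5$ carries all three colors among its ten triples, and set $T:=V(\mathcal H)\setminus S$, so $|T|=7$. First I would show that \emph{every} $6$-element subset of $T$ is a $B$-set: for $u\in T$ the partition $V(\mathcal H)=(S\cup\{u\})\,\dot\cup\,(T\setminus\{u\})$ is balanced, and $S\cup\{u\}$ cannot be a $B$-set since $S$ already forces all three colors to appear, so Lemma~\ref{lem:partition} makes $T\setminus\{u\}$ a $B$-set; letting $u$ run over $T$ gives the claim.

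Then I would split on whether all these $B$-sets avoid the same color. If they do, $T$ is a $B^+$-set, contradicting Lemma~\ref{lem:NoB+-set}, and we are done. Otherwise there are distinct $u_1,u_2\in T$ and distinct colors $a,b$ with $T\setminus\{u_1\}$ avoiding $a$ and $T\setminus\{u_2\}$ avoiding $b$; writing $c$ for the remaining color and $P:=T\setminus\{u_1,u_2\}$, the $5$-set $P$ lies in both of these $B$-sets and hence avoids both $a$ and $b$, so $P$ is a monochromatic $K_5^3$ in color $c$.

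The crucial step is then to pin down the colors of the edges from $u_1$ and $u_2$ into $P$. Since $T\setminus\{u_1\}=P\cup\{u_2\}$ is a $B$-set avoiding $a$, an edge $\{u_2,p,p'\}$ with $p,p'\in P$ is not $a$; nor can it be $c$, because the complementary triple inside $P$ is a $c$-edge disjoint from it and a $B$-set has no disjoint monochromatic pair. So every such edge is colored $b$, and symmetrically every edge $\{u_1,p,p'\}$ is colored $a$. Now I would fix any $p^*\in P$ and look at the balanced partition $V(\mathcal H)=(S\cup\{p^*\})\,\dot\cup\,\bigl(\{u_1,u_2\}\cup(P\setminus\{p^*\})\bigr)$: the first class contains the rainbow $S$ and so uses all three colors, while the second contains a $c$-edge inside $P\setminus\{p^*\}$, an $a$-edge $\{u_1,p,p'\}$, and a $b$-edge $\{u_2,p,p'\}$, hence also uses all three colors. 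Neither class is a $B$-set, contradicting Lemma~\ref{lem:partition}.

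I expect the whole difficulty to sit in this second case; everything else is a direct application of Lemmas~\ref{lem:partition} and~\ref{lem:NoB+-set}. The delicate point is recognizing that two differently-avoided colors on overlapping $6$-subsets of $T$ are not harmless but instead manufacture a monochromatic $K_5^3$, which in turn forces enough rainbow structure on a cleverly chosen second partition class to violate the partition lemma.
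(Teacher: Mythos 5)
Your argument is correct, and its first half (pass to the $7$-vertex complement $T$, note that $S\cup\{u\}$ is never a $B$-set, and invoke Lemma~\ref{lem:partition} to make every $T\setminus\{u\}$ a $B$-set, with the all-labels-equal case killed by Lemma~\ref{lem:NoB+-set}) coincides with the paper's setup. Where you diverge is the endgame when two removed vertices $u_1,u_2$ yield different avoided colors. The paper first shows that at most two of the three label pairs can occur (via an edge inside the $4$-set $Y\setminus\{v_1,v_2,v_3\}$), then takes the minority class $W$, observes that any perfect matching of the $B$-set $Y\setminus\{w\}$ must contain a $c_3$-edge $e$, and clashes $e$ directly against the $B$-set $Y\setminus\{u\}$ for some majority vertex $u\notin e$ -- a contradiction entirely inside $T$, with no further use of Lemma~\ref{lem:partition}. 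You instead exploit any two distinct labels: the common $5$-set $P=T\setminus\{u_1,u_2\}$ must be monochromatic in the third color $c$, the cross edges $\{u_1,p,p'\}$ and $\{u_2,p,p'\}$ are pinned to $a$ and $b$ respectively by the no-disjoint-monochromatic-pair clause, and then a second, hand-built balanced partition $(S\cup\{p^*\})\,\dot\cup\,(\{u_1,u_2\}\cup(P\setminus\{p^*\}))$ has both classes rainbow, contradicting Lemma~\ref{lem:partition} again. Your route buys a cleaner case split (no need to rule out three labels, no WLOG on class sizes) and produces the appealing intermediate structure of a monochromatic $K_5^3$ with determined cross colors; the paper's route is slightly more economical in that it needs only one application of the partition lemma and never leaves the set $T$. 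Both are complete and of comparable length; all the auxiliary facts you use (each $B$-set avoids exactly one color, the balanced partitions have the right sizes, the required edges lie in the intended classes) check out.
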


\begin{proof} [Proof of Lemma~\ref{lem:NoRainbow}]
Suppose that there is a $K_5^3$ using colors $c_1, c_2, c_3$ spanning a vertex set $X\subseteq V(\mathcal H)$. Take an arbitrary vertex $v\in V(\mathcal H)\setminus X=:Y$. Then the set $X\cup \{v\}$ is not a $B$-set, implying, by Lemma~\ref{lem:partition}, that $Y\setminus  \{v\}$ is a $B$-set. As the choice of vertex $v$ was arbitrary, the same holds for any vertex $v\in Y$.
We label the vertices of $Y$ by pairs of colors in the following way: a vertex $v \in Y$ is assigned
label $\chi(v)=\{c_1,c_2\}$ if the triples in the $B$-set, $Y\setminus \{v\}$, are colored by~$c_1$ or $c_2$.

First we show that only two of the three possible color pairs may
occur. To see this, suppose we have
vertices $v_1,v_2,v_3 \in Y$ with $\chi(v_1)= \{c_1, c_2\}$, $\chi
(v_2)= \{c_1, c_3\}$, and $\chi(v_3)= \{c_2, c_3\}$. As $|Y\setminus
\{v_1,v_2,v_3\}|=4$ there is an edge in $Y\setminus
\{v_1,v_2,v_3\}$ and its color should be either $c_1$ or $c_2$, and
either $c_1$ or $c_3$, and either $c_2$ or $c_3$, a contradiction.

Now, assume, without loss of generality, that there is a set $U \subseteq Y$ with
vertices labeled $\{c_1, c_2\}$ and a set $W \subseteq Y$ with vertices labeled
$\{c_1$, $c_3\}$, and that $|U|>|W|$. First assume that
$W=\emptyset$. In this case $Y=U$ is a $B^+$-set, a contradiction
by Lemma~\ref{lem:NoB+-set}. Thus $W\neq\emptyset$, so
pick a vertex $w\in W$. Then in any partition of $Y\setminus
\{w\}$ into two disjoint edges, there is one edge, say $e$, that is colored
$c_3$. As $|U|\ge 4$ there is a vertex $u\in U\setminus e$. By definition, any edge in $Y\setminus \{u\}$ is colored either $c_1$ or $c_2$, a contradiction with the fact that $e$ is $c_3$.  Thus there is no $K_5^3$ in $\mathcal H$ using all three colors.

\end{proof}

\begin{lem}\label{lem:edge-complete} For any pair of vertices $u, v \in V(\A)$ there exists a monochromatic $K_6^3 \subset \A$ containing $u$ and $v$. \end{lem}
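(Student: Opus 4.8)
I would argue by contradiction. Fix $u,v\in V(\mathcal H)$ and suppose that no monochromatic $K_6^3$ contains both of them; put $W:=V(\mathcal H)\setminus\{u,v\}$, so $|W|=10$. For each $6$-element set $P\subseteq W$, Lemma~\ref{lem:partition} applied to the balanced partition $V(\mathcal H)=P\,\dot\cup\,\bigl(\{u,v\}\cup(W\setminus P)\bigr)$ yields that $P$ is a $B$-set, or else that $T_P:=\{u,v\}\cup(W\setminus P)$ is a $B$-set. I would split into two cases according to whether the first alternative holds for \emph{every} such $P$.

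If $P$ is a $B$-set for every $6$-subset $P\subseteq W$, then $\mathcal H$ restricted to the $10$ vertices of $W$ has no monochromatic matching of size $2$: any such matching would span six vertices, i.e.\ some $P$, but a $B$-set contains no monochromatic disjoint pair of edges. Discarding two vertices of $W$ and applying Theorem~\ref{thm:AFL} with $t=r=3$, $k=2$ (so $n=8$) produces a monochromatic matching of size $2$ inside $W$ — a contradiction.

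Hence $T:=T_P$ is, for some $P$, a $B$-set containing $u$ and $v$; since a $B$-set uses exactly two colours we may assume $T$ avoids $c_3$, uses $c_1$ and $c_2$, and contains no monochromatic matching of size $2$. Now the standing hypothesis that $\mathcal H$ has no $2$-coloured matching of size $4$ comes in. Every split of $T$ into two triples has one edge coloured $c_1$ and one coloured $c_2$, so if the complement $\overline T:=V(\mathcal H)\setminus T$ admitted a split into two triples both coloured from $\{c_1,c_2\}$, the union of the two splits would be a $2$-coloured matching of size $4$. Thus every split of $\overline T$ contains a $c_3$-edge, and $\overline T$ has no monochromatic matching of size $2$ in colour $c_1$ or $c_2$. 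I would then distinguish whether $\overline T$ has a monochromatic matching $h_1\cup h_2=\overline T$ of colour $c_3$: if not, every split of $\overline T$ has \emph{exactly} one $c_3$-edge, so $\overline T$ has exactly ten $c_3$-edges, and Lemma~\ref{lem:NoRainbow} applied to the $5$-subsets of $\overline T$ forces the $c_3$-edges of $\overline T$ to form a star about a single vertex (or makes $\overline T$ itself a $B$-set in colours $\{c_3,c_i\}$). In each resulting subcase I would apply Lemma~\ref{lem:NoRainbow} once more, this time to $5$-sets straddling $T$ and $\overline T$, to pin down the colours of enough ``crossing'' triples, and then assemble from a single edge inside $T$ (or a single $c_3$-edge of $\overline T$) together with suitable crossing triples a $2$-coloured matching of size $4$, contradicting the hypothesis.

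The step I expect to be the main obstacle is precisely this last one: the colours of crossing triples are not controlled by any structural lemma directly, only indirectly through Lemma~\ref{lem:NoRainbow} on cleverly chosen $5$-sets, and it may well be necessary to play several different choices of the $B$-set $T$ (equivalently, of $P$) against one another. The no-$2$-coloured-matching hypothesis is genuinely indispensable here: if one partitions $V(\mathcal H)$ into $6$-sets $A,B$ and colours a triple $c_1$, $c_2$, or $c_3$ according as it lies in $A$, lies in $B$, or crosses, then every $5$-set still sees at most two colours, yet no monochromatic $K_6^3$ contains a pair with one vertex in $A$ and one in $B$ — a colouring that does of course contain a $2$-coloured matching of size $4$.
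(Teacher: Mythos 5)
There is a genuine gap: what you give is a plan whose decisive step is missing, and you say so yourself. The reductions you do carry out are fine (Case 1 via Lemma~\ref{lem:partition} and Theorem~\ref{thm:AFL} with $r=t=3$, $k=2$ is correct, as is the conclusion that some $B$-set $T\ni u,v$ in colours $\{c_1,c_2\}$ exists and that every split of $\overline{T}$ contains a $c_3$-edge). But up to that point the assumption ``no monochromatic $K_6^3$ contains $u,v$'' has not been used in any deduction, and the part that would have to use it --- controlling the colours of triples crossing between $T$ and $\overline{T}$ via Lemma~\ref{lem:NoRainbow} on straddling $5$-sets, the claimed ``star'' structure of the $c_3$-edges of $\overline{T}$, and finally assembling a $2$-coloured matching of size $4$ --- is asserted, not proved. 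Your own closing example shows why this cannot be waved through: Lemma~\ref{lem:NoRainbow}-type information about $5$-sets alone does not determine crossing triples, so the contradiction must come from a genuinely new combination with the no-$2$-coloured-matching hypothesis, and that combination is exactly what is absent.

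For comparison, the paper closes this step by a different and much shorter route: by Lemma~\ref{lem:NoRainbow}, every pair of vertices lies only in triples of at most two colours, so one can label each pair of $V(\mathcal H)$ by the colour-pair it induces, obtaining a $3$-colouring of the edges of $K_{12}^2$. Applying Theorem~\ref{thm:AFL} with $r=2$, $t=3$, $k=3$ to the ten vertices outside $\{u,v\}$ gives three disjoint pairs $e_1,e_2,e_3$ all labelled, say, $\{c_1,c_2\}$; if the remaining six vertices (which include $u$ and $v$) spanned any triple $E$ of colour $c_1$ or $c_2$, then $E$ together with $e_1\cup v_1$, $e_2\cup v_2$, $e_3\cup v_3$ (where $v_1,v_2,v_3$ are the three leftover vertices) would be a perfect matching avoiding $c_3$, contradicting the standing assumption. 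Hence those six vertices form a monochromatic $K_6^3$ in $c_3$ containing $u$ and $v$. If you want to salvage your approach, this auxiliary pair-labelling is the mechanism you were missing for controlling crossing triples; as written, your argument does not yet constitute a proof.
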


\begin{proof}[Proof of Lemma~\ref{lem:edge-complete}]
Considering any pair of vertices $x,y \in \mathcal H$, we know from
Lemma~\ref{lem:NoRainbow} that $x$ and $y$ are not contained in any
 $K_5^3$ using all of $\{c_1, c_2, c_3\}$. Thus the edges containing $x$ and $y$ are of at
most two colors. As in the proof of Lemma~\ref{lem:NoRainbow}, label the pair $\{x,y\}$ by the pair of colors it induces. If all edges containing $x$ and $y$ are of the same
color, then choose an arbitrary color to complete the pair.  This defines a coloring of the edges of the complete graph $K_{12}^2$.

Let us choose two arbitrary vertices $u, v \in \mathcal H$. By
Theorem~\ref{thm:AFL} there is a monochromatic matching
$M$ of size $3$ consisting of the 2-edges $\{e_1,e_2,e_3\}$ in the graph induced by the ten
vertices $V(\mathcal H)\setminus \{u,v\}$, $K_{10}^{2}$. Suppose that $M$ is
colored by the color-pair $\{c_1, c_2\}$.  We claim that $V(\mathcal
H)\setminus V(M)$ induces a monochromatic $K_6^3$ colored in $c_3$ in the original hypergraph. If this were not the case, there would exist a $3$-edge $E$ of
$\mathcal H$ with color either $c_1$ or $c_2$. Let $\{ v_1, v_2, v_3
\}$ be the set $V(\mathcal H) \setminus (V(M) \cup E)$. Then the
perfect matching $\{E, e_1 \cup v_1, e_2 \cup v_2, e_3 \cup v_3
\}$ would not contain the color $c_3$, contradicting our
assumption on~$\mathcal H$. Thus, $u$ and $v$ are contained in the monochromatic $K^3_6$ colored in $c_3$ on the vertex set $V(\mathcal H) \setminus V(M)$.

\end{proof}

\begin{lem}\label{lem:same-colored} Any two monochromatic $K_6^3 \subset \A$ intersecting in at most $4$ vertices have different colors.
 \end{lem}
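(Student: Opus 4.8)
The plan is to argue by contradiction: assume $A$ and $B$ are monochromatic copies of $K_6^3$ of the same color, say $c_1$, with $|A\cap B|\le 4$, and produce a $2$-colored matching of size $4$ in $\mathcal H$. Write $I=A\cap B$, put $t=|I|$, and set $R:=V(\mathcal H)\setminus(A\cup B)$, so that $A\setminus B$ and $B\setminus A$ each have $6-t$ vertices and $R$ has $t$ vertices. The one general fact I would record first is that every perfect matching inside $A$ is monochromatic in $c_1$, so $A$ is an $A$-set; hence by Lemma~\ref{lem:A-set} the set $V(\mathcal H)\setminus A$ is a $B$-set in colors $\{c_2,c_3\}$ (in particular it contains no edge of color $c_1$), and symmetrically so is $V(\mathcal H)\setminus B$.

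I would then split on $t$. When $t\le 3$ the contradiction is immediate: then $|B\setminus A|=6-t\ge 3$, so $B\setminus A$ contains an edge $e$; since $e\subseteq B$ its color is $c_1$, while since $e\subseteq B\setminus A\subseteq V(\mathcal H)\setminus A$ --- a $B$-set avoiding $c_1$ --- its color is not $c_1$, a contradiction. So nothing needs to be built in this range.

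The substantive case is $t=4$, where $|A\setminus B|=|B\setminus A|=2$ and the previous trick fails because those sets are too small to contain an edge. Write $A\setminus B=\{a_1,a_2\}$, $B\setminus A=\{b_1,b_2\}$, $I=\{i_1,i_2,i_3,i_4\}$, $R=\{r_1,r_2,r_3,r_4\}$; the idea is to manufacture extra $B$-sets living on $I\cup R$. The $6$-set $\{i_3,i_4,a_1,a_2,b_1,b_2\}$ carries the $c_1$-monochromatic perfect matching $\{\,\{i_3,a_1,a_2\},\{i_4,b_1,b_2\}\,\}$ --- the first edge lying inside $A$, the second inside $B$ --- so it is an $A$-set, and by Lemma~\ref{lem:A-set} its complement $\{i_1,i_2\}\cup R$ is a $B$-set avoiding $c_1$. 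By the symmetric argument (swap the pairs $\{i_1,i_2\}$ and $\{i_3,i_4\}$) the set $\{i_3,i_4\}\cup R$ is a $B$-set avoiding $c_1$ as well, and we already know $\{a_1,a_2\}\cup R=V(\mathcal H)\setminus B$ and $\{b_1,b_2\}\cup R=V(\mathcal H)\setminus A$ are $B$-sets avoiding $c_1$. I would then exhibit the perfect matching consisting of the four edges $\{i_1,i_2,r_1\}$, $\{i_3,i_4,r_2\}$, $\{a_1,a_2,r_3\}$, $\{b_1,b_2,r_4\}$: each of these lies inside one of the four $B$-sets just produced, so each is colored $c_2$ or $c_3$, and therefore this is a matching of size $4$ using at most two colors --- the desired contradiction.

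I expect the only genuine obstacle to be this $t=4$ step. Once the two cliques overlap in exactly four vertices, $A\setminus B$ and $B\setminus A$ carry no edge, so there is no forbidden monochromatic edge to point to directly; the crux is to see that each balanced split of $A\cap B$ --- two of its vertices attached to $A\setminus B$, the other two to $B\setminus A$ --- yields an $A$-set whose complement is exactly a $B$-set on $(A\cap B)\cup R$, and that these $B$-sets, together with $V(\mathcal H)\setminus A$ and $V(\mathcal H)\setminus B$, are precisely the pieces needed to assemble a size-$4$ matching that stays entirely within $\{c_2,c_3\}$.
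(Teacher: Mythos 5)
Your proof is correct, and in the case $|A\cap B|=4$ it reaches the paper's contradiction by a somewhat different mechanism. The paper argues bare-hands: it first shows that no edge with exactly one vertex in $R=V(\mathcal H)\setminus(A\cup B)$ can be colored $c_1$ (such an edge could be completed by two disjoint $c_1$-edges inside the cliques to three disjoint $c_1$-edges on $9$ vertices, and then a fourth edge on the remaining $3$ vertices gives a $2$-colored perfect matching), and then observes that four disjoint such edges form a perfect matching in colors $\{c_2,c_3\}$. You instead manufacture auxiliary $A$-sets straddling both cliques (e.g.\ $\{i_3,i_4,a_1,a_2,b_1,b_2\}$ with its $c_1$-monochromatic matching $\{i_3,a_1,a_2\},\{i_4,b_1,b_2\}$) and invoke Lemma~\ref{lem:A-set} to get four $B$-sets on $(A\cap B)\cup R$, $V(\mathcal H)\setminus A$ and $V(\mathcal H)\setminus B$, all avoiding $c_1$, from which you assemble the same kind of perfect matching (one $R$-vertex plus a pair from $A\cup B$ per edge). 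The underlying combinatorics is the same --- in both proofs the contradiction is a perfect matching avoiding $c_1$ whose edges each meet $R$ in one vertex --- but routing the color restrictions through Lemma~\ref{lem:A-set} buys a cleaner, more uniform justification and also handles the $|A\cap B|\le 3$ case in one line (an edge of $B\setminus A$ would have to be $c_1$ yet lie in the $B$-set $V(\mathcal H)\setminus A$), where the paper argues directly via three disjoint monochromatic edges. One small caveat: you use the refined form of Lemma~\ref{lem:A-set} in which the complementary $B$-set avoids precisely the color of the monochromatic matching in the $A$-set; this is exactly what the paper's proof of that lemma establishes (and is needed so that all four of your $B$-sets avoid the same color $c_1$), but it is slightly stronger than the lemma's literal statement, so it deserves an explicit remark.
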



\begin{proof}[Proof of Lemma~\ref{lem:same-colored}]
Consider two distinct monochromatic $K_6^3 \subset \A$, $X_1, X_2$,  of the same color, say $c_1$, with $|V(X_1)\cap V(X_2)|\leq 4$. If $|V(X_1) \cap V(X_2)| \leq 3$, then they span at least $9$ vertices and we would have $3$ disjoint edges of the same color. 
Therefore, we may assume that $|V(X_1) \cap V(X_2)| = 4$. Consider
then the $4$ vertices not contained in either $V(X_1)$ or $V(X_2)$,
$\bar{X} = V(\A) \setminus \big(V(X_1) \cup V(X_2)\big)$. Observe
that no edge $e$ with $|e \cap \bar{X}| = 1$ is $c_1$, otherwise we
would have $9$ vertices forming three disjoint edges colored in
$c_1$. Therefore any such $e$ must be colored either $c_2$ or $c_3$,
and as $|\bar{X}| =4$ they induce a perfect matching colored only in
$c_2$ and $c_3$ which is forbidden in $\mathcal H$. Thus the two
distinct monochromatic $K_6^3$ must have different colors.

\end{proof}

\begin{lem}\label{lem:intersection-complete} Any two distinct monochromatic $K_6^3$ of different colors intersect in exactly 2 vertices.\end{lem}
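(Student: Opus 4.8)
Let $X_1$ and $X_2$ be two distinct monochromatic $K_6^3$'s in $\A$, of colors $c_1$ and $c_2$ respectively. We want to show $|V(X_1)\cap V(X_2)| = 2$. The plan is to rule out every other value of $|V(X_1)\cap V(X_2)|\in\{0,1,3,4,5\}$, using that $\A$ has no $2$-colored matching of size $4$. The cases where the intersection is large (4 or 5) and small (0 or 1) will be handled by directly exhibiting a forbidden $2$-colored perfect matching, while the intersection-$3$ case is slightly more delicate.

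\begin{proof}[Proof of Lemma~\ref{lem:intersection-complete}]
Let $X_1$ be a monochromatic $K_6^3$ of color $c_1$ and $X_2$ a monochromatic $K_6^3$ of color $c_2\neq c_1$, and set $k=|V(X_1)\cap V(X_2)|$, $\bar X = V(\A)\setminus(V(X_1)\cup V(X_2))$, so $|\bar X| = k$.

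If $k\ge 3$, then $X_1$ contains a perfect matching $M_1$ in $c_1$ and $X_2$ a perfect matching $M_2$ in $c_2$. Since $|V(X_1)\cap V(X_2)|\ge 3$, we may pick an edge $e_1\in M_1$ and an edge $e_2\in M_2$ with $e_1\cup e_2\subseteq V(X_1)\cap V(X_2)$ if $k\ge 6$ — more usefully, when $k=6$ the two hexads coincide on all vertices, impossible for distinct sets on $12$ vertices, so in fact $k\le 5$. For $k=4$ or $k=5$: discard from $M_1$ the edges meeting $V(X_2)$ and from $M_2$ the edges meeting $V(X_1)$; a short count shows at least one full $c_1$-edge of $M_1$ lies in $V(X_1)\setminus V(X_2)$ when $k\le 5$ (since $|V(X_1)\setminus V(X_2)| = 6-k\ge 1$ is too weak, so instead argue: $X_1$ has $2$ disjoint $c_1$-edges among any $6$ of its vertices, and $|V(X_1)\setminus V(X_2)|+ \tfrac12\cdot\text{(shared)}$...). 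The clean approach here is: take any edge $e\subseteq \bar X$-free configuration — I would instead argue as in Lemma~\ref{lem:same-colored}, using $\bar X$: every edge meeting $\bar X$ in exactly one vertex cannot be $c_1$ (else combine with two disjoint $c_1$-edges of $X_1$) and cannot be $c_2$ (else combine with two disjoint $c_2$-edges of $X_2$); when $k=4$ this forces the $4$ vertices of $\bar X$ to span a monochromatic-free, $c_3$-only situation is not immediate, so instead I extend one $c_1$-edge of $X_1$ disjoint from $\bar X$, one $c_2$-edge of $X_2$ disjoint from $\bar X$, leaving $4$ vertices including all of $\bar X$ whose spanning edge is forced to avoid $c_1$ and $c_2$, contradiction; for $k=5$ one similarly builds three disjoint edges ($c_1$ from $X_1$, $c_2$ from $X_2$, and the remaining triple) leaving a fourth edge inside $\bar X$ that must avoid both colors.

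If $k\le 1$, then $|V(X_1)\cup V(X_2)|\ge 11$, so $\bar X$ has at most one vertex. Take the perfect matching $M_1$ of $X_1$ in $c_1$ and the perfect matching $M_2$ of $X_2$ in $c_2$; together they use $12-k\ge 11$ vertices. If $k=0$ the union of (two disjoint edges from $M_1$) and (two disjoint edges from $M_2$) is a $2$-colored perfect matching, a contradiction. If $k=1$, the overlap vertex $w$ lies in one edge $f_1\in M_1$ and one edge $f_2\in M_2$; deleting $f_1$ and the vertex of $\bar X$ leaves, together with suitable edges of $M_2$ and two leftover vertices completing a triple with $f_2$, a $2$-colored perfect matching — the counting is routine.

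The remaining case $k=3$ is the main obstacle. Here $|\bar X|=3$. I would argue: the triple $\bar X$ has some color $c_i$. It cannot be $c_1$: then $\bar X$ together with two disjoint $c_1$-edges from $X_1$ (which fit in $V(X_1)$ since $|V(X_1)|=6$) gives three disjoint $c_1$-edges plus a fourth edge completing the perfect matching from $V(X_2)\setminus V(X_1)$ — wait, we must land a fourth edge; more carefully, $\bar X\cup(\text{a }c_1\text{-edge of }X_1\text{ inside }V(X_1)\setminus V(X_2))$: since $|V(X_1)\setminus V(X_2)|=3$, that difference itself is a $c_1$-edge $e_1$; similarly $e_2:=V(X_2)\setminus V(X_1)$ is a $c_2$-edge; and the $6$-set $V(X_1)\cap V(X_2)\cup\bar X$ splits into two disjoint triples, one of which must be an edge — here one uses Lemma~\ref{lem:partition} or Lemma~\ref{lem:NoRainbow} applied to $V(X_1)\cap V(X_2)$ (three vertices) plus pieces of $\bar X$ to locate a fourth edge of color $c_1$ or $c_2$. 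The cleanest route is: the three vertices $V(X_1)\setminus V(X_2) = e_1$ form a $c_1$-edge and $V(X_2)\setminus V(X_1)=e_2$ form a $c_2$-edge; then $\{e_1,e_2\}$ plus a perfect matching of the remaining $6$ vertices $S=(V(X_1)\cap V(X_2))\cup\bar X$ would be forbidden unless every matching in $S$ uses a third color or repeats — so $S$ is a $B$-set in $c_1,c_2$... no: $S$ must avoid both $c_1$ and $c_2$ in at least one edge of each matching, forcing (since $|S|=6$) every perfect matching of $S$ to contain a $c_3$-edge, hence $S$ contains at least two disjoint $c_3$-edges; then combining with $e_1$ ($c_1$) gives a $2$-colored perfect matching, contradiction. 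This last contradiction closes $k=3$. Hence $k=2$.
\end{proof}

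I expect the genuine difficulty to be organizing the $k=3$ and $k=4$ cases so that a fourth disjoint edge is always produced; the bookkeeping of which three leftover vertices form an edge (and of what color) is where one must invoke the earlier lemmas (\ref{lem:partition}, \ref{lem:NoRainbow}) rather than a bare counting argument, and getting those invocations exactly right is the crux.
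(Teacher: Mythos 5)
There is a genuine gap, and it sits exactly where you wave it away: the case $|V(X_1)\cap V(X_2)|=1$. Your sketch (``deleting $f_1$ and the vertex of $\bar X$ leaves, together with suitable edges of $M_2$ \dots, a $2$-colored perfect matching --- the counting is routine'') cannot work by counting alone: when the two hexads share one vertex, exactly one vertex $z$ of $\mathcal H$ lies outside $V(X_1)\cup V(X_2)$, and every perfect matching of $\mathcal H$ must contain an edge through $z$; no such edge lies inside $X_1$ or $X_2$, so its color is completely uncontrolled and no combination of edges of $M_1$ and $M_2$ yields a forbidden $2$-colored perfect matching. The paper closes this case with real additional input: by Lemma~\ref{lem:edge-complete} the vertex $z$ lies in some monochromatic $K_6^3$, call it $X$; since $X$ meets $V(X_1)\cup V(X_2)$ in $5$ vertices it shares an edge with $X_1$ or $X_2$, so its color is $c_1$ or $c_2$, say $c_1$; then Lemma~\ref{lem:same-colored} forces $|V(X)\cap V(X_1)|\ge 5$, and only then can one assemble two $c_2$-edges from $X_2$, a $c_1$-edge of $X$ through $z$, and a leftover $c_1$-edge inside $X_1$ into a $2$-colored perfect matching. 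You never invoke either lemma here, and without them the case does not close.

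Your treatment of intersections of size at least $3$ is also off, though in a repairable way: you overlook the one-line argument that any triple inside $V(X_1)\cap V(X_2)$ is an edge of both complete hypergraphs and hence would carry both colors $c_1$ and $c_2$, which kills $k\ge 3$ instantly (this is the paper's ``trivial'' observation). The substitute arguments you improvise are not valid as written: in your $k=3$ analysis, the step ``every perfect matching of $S$ contains a $c_3$-edge, hence $S$ contains two disjoint $c_3$-edges'' is false in general (all $c_3$-triples of $S$ could share a vertex), and even granting it, the matching you exhibit at the end consists of two $c_3$-edges, the $c_1$-edge $e_1$, and the $c_2$-edge $e_2$ --- that uses three colors, so it is not a forbidden configuration and gives no contradiction. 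The $k=0$ case you do correctly, matching the paper, but as it stands the proposal is missing the key idea for $k=1$ and replaces a trivial argument for $k\ge 3$ with an incorrect one.
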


\begin{proof}[Proof of Lemma~\ref{lem:intersection-complete}]
First, observe that trivially two monochromatic $K_6^3$, of
different colors, say a $c_1$, $X_1$ and a $c_2$, $X_2$ cannot meet
in more than $2$ vertices.  Otherwise any edge contained in their intersection must be of both colors. Also, $X_1$ and $X_2$ must intersect, otherwise $V(X_1)\cup V(X_2)$ induces a $2$-colored perfect matching.  This produces a contradiction with the structure of $\mathcal H$

It remains to show that $X_1$ and $X_2$ cannot intersect in only
one vertex. If this were the case, then by Lemma~\ref{lem:edge-complete} any vertex $v\in V(\mathcal H)\setminus \big(V(X_1)\cup V(X_2)\big)$ is covered by a monochromatic
$K_6^3$, $X$.  As $|V(X)\cap \big(V(X_1)\cup V(X_2)\big)|=5$, the complete hypergraph  $X$
intersects $X_1$ or $X_2$ in at least $3$ vertices, hence its
color is either $c_1$ or $c_2$, say $c_1$. 
 Then $|X \cap X_1|\ge 5$ by Lemma~\ref{lem:same-colored}. We choose two edges colored $c_2$  from $X_2$, and an edge colored $c_1$ containing $v$ from $X$. One can easily see that
the remaining vertices are in $X_1$, hence the edge containing them is $c_1$. This yields a 2-colored matching, which is a contradiction.

\end{proof}

\begin{defn} Call an edge-colored $3$-uniform hypergraph a \emph{disk} if it contains three monochromatic complete subgraphs on six vertices, $X_1$, $X_2$, $X_3$, each colored a different color such that $|V(X_i)\cap V(X_j)|=2$ for $i,j = 1,2,3$ and $V(X_1) \cap V(X_2) \cap V(X_3) = \emptyset$.
\end{defn}

\begin{lem}\label{lem:disk} $\A$ is a disk. \end{lem}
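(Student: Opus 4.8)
The plan is to extract the three monochromatic $K_6^3$'s required by the definition of a disk one color at a time, using Lemma~\ref{lem:edge-complete} to produce monochromatic copies through prescribed pairs of vertices, Lemma~\ref{lem:same-colored} to force each new copy to use a fresh color, and Lemma~\ref{lem:intersection-complete} to control the pairwise intersections; the empty triple intersection will then fall out of a one-line counting argument.

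Concretely, first apply Lemma~\ref{lem:edge-complete} to an arbitrary pair of vertices to obtain a monochromatic $K_6^3$, which we call $X_1$ and whose color we call $c_1$. The complement $V(\mathcal H)\setminus V(X_1)$ has six vertices; pick a pair $u,v$ inside it and apply Lemma~\ref{lem:edge-complete} again to get a monochromatic $K_6^3$, $X_2$, through $u$ and $v$. Since $X_2$ has (at least) two vertices outside $V(X_1)$, we have $|V(X_1)\cap V(X_2)|\le 4$, so Lemma~\ref{lem:same-colored} forces the color of $X_2$ to differ from $c_1$; call it $c_2$. As $X_1$ and $X_2$ now have distinct colors, Lemma~\ref{lem:intersection-complete} gives $|V(X_1)\cap V(X_2)|=2$, so $V(X_1)\cup V(X_2)$ has exactly $10$ vertices. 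Let $a,b$ be the two vertices of $V(\mathcal H)$ lying outside $V(X_1)\cup V(X_2)$, and apply Lemma~\ref{lem:edge-complete} one last time to get a monochromatic $K_6^3$, $X_3$, through $a$ and $b$. Because $a$ and $b$ lie outside both $V(X_1)$ and $V(X_2)$, the copy $X_3$ meets each of $X_1$ and $X_2$ in at most $4$ vertices, so by Lemma~\ref{lem:same-colored} its color is neither $c_1$ nor $c_2$ and hence equals $c_3$.

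Thus $X_1,X_2,X_3$ are monochromatic $K_6^3$'s of the three distinct colors, and Lemma~\ref{lem:intersection-complete} yields $|V(X_i)\cap V(X_j)|=2$ for every pair $i\ne j$. By inclusion--exclusion,
\[
|V(X_1)\cup V(X_2)\cup V(X_3)| = 18-2-2-2+|V(X_1)\cap V(X_2)\cap V(X_3)| = 12 + |V(X_1)\cap V(X_2)\cap V(X_3)|,
\]
and since this union is contained in the $12$-vertex set $V(\mathcal H)$, the triple intersection must be empty. This is precisely the last condition in the definition of a disk, so $\mathcal H$ is a disk. I do not anticipate a real obstacle: the argument is a short bootstrap on the already-proved structural lemmas, and the only point requiring attention is that each freshly produced $K_6^3$ is pinned to a new color --- which is guaranteed because the vertex pair fed into Lemma~\ref{lem:edge-complete} is chosen to avoid the earlier copies, keeping Lemma~\ref{lem:same-colored} applicable.
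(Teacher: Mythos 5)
Your proof is correct and follows essentially the same route as the paper: build $X_1$, then $X_2$ through a pair outside $X_1$, then $X_3$ through the two remaining vertices, invoking Lemma~\ref{lem:edge-complete}, Lemma~\ref{lem:same-colored}, and Lemma~\ref{lem:intersection-complete} exactly as the paper does. Your explicit inclusion--exclusion check that $V(X_1)\cap V(X_2)\cap V(X_3)=\emptyset$ is a small but welcome addition that the paper leaves implicit.
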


\begin{proof}[Proof of Lemma~\ref{lem:disk}]
Choose any pair of vertices $u_1,v_1\in V(\mathcal H)$.  By Lemma~\ref{lem:edge-complete}, there is a monochromatic complete hypergraph $X_1$ of size six covering $u_1$ and $v_1$. Choose a pair of vertices $u_2,v_2\in V(\mathcal H)\setminus V(X_1)$. By Lemma~\ref{lem:edge-complete}, there is a monochromatic complete hypergraph $X_2$ covering $u_2,v_2$. By Lemma~\ref{lem:same-colored}, the complete hypergraphs $X_1$ and $X_2$ have different colors, and by Lemma~\ref{lem:intersection-complete}, we have $|V(X_1)\cap V(X_2)|=2$. Thus $|V(\mathcal H)\setminus \big(V(X_1)\cup V(X_2)\big)|=2$. Let $u_3,v_3$ be the two vertices from $V(\mathcal H)\setminus \big(V(X_1)\cup V(X_2)\big)$. Again, by Lemma~\ref{lem:edge-complete}, there is a monochromatic complete hypergraph $X_3$ covering $u_3,v_3$. By Lemma~\ref{lem:same-colored}, the color of the edges of $X_3$ is different from the color of $X_1$ and of $X_2$. By Lemma~\ref{lem:intersection-complete}, the hypergraphs $X_3$ and $X_i$, $i=1,2$, intersect in exactly two vertices, hence $\mathcal H$ is a disk.
\end{proof}

\begin{lem}\label{lem:disk-Rainbow} Any disk, $\mathcal D$, contains a $K_5^3$ using all three colors. \end{lem}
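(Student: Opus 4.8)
The plan is to produce an explicit $5$-element vertex set on which all three colors already appear, using nothing more than the combinatorial skeleton of a disk. Write $X_1, X_2, X_3$ for the three monochromatic copies of $K_6^3$ given by the definition of a disk, and normalize so that $X_i$ is colored $c_i$. For $i\ne j$ put $A_{ij}=V(X_i)\cap V(X_j)$, a set of size $2$; since $V(X_1)\cap V(X_2)\cap V(X_3)=\emptyset$, the three pairs $A_{12},A_{13},A_{23}$ are pairwise disjoint. This disjointness is the one structural fact the argument really needs, and it is precisely the clause about the empty triple intersection in the definition.

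Next I would fix an arbitrary vertex $w\in A_{23}$ and set $S=A_{12}\cup A_{13}\cup\{w\}$. Because the three parts are pairwise disjoint, $|S|=5$. I would then verify the three colors one at a time. The set $A_{12}\cup A_{13}$ is contained in $V(X_1)$, so any triple inside it --- for instance one vertex of $A_{12}$ together with both vertices of $A_{13}$ --- is an edge of $X_1$ and hence has color $c_1$. The two vertices of $A_{12}$ together with $w$ all lie in $V(X_2)$ (as $A_{12}\subseteq V(X_2)$ and $w\in A_{23}\subseteq V(X_2)$), so that triple has color $c_2$. Symmetrically, the two vertices of $A_{13}$ together with $w$ lie in $V(X_3)$, so that triple has color $c_3$. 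Thus $S$ spans edges of all three colors, i.e.\ $\mathcal D[S]$ is a $K_5^3$ using $c_1,c_2,c_3$.

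There is no genuine obstacle here: the proof is a single well-chosen $5$-set. The only points needing care are (i) checking $|S|=5$, which rests entirely on the pairwise disjointness of the $A_{ij}$, and (ii) bookkeeping which intersection pair lies inside which $X_k$, so that the three monochromatic triples are correctly identified with $c_1$, $c_2$, $c_3$. Finally, combining this lemma with Lemma~\ref{lem:disk} and Lemma~\ref{lem:NoRainbow} closes out the proof of Theorem~\ref{thm:main1}: $\mathcal H$ would then be a disk, hence contain a rainbow $K_5^3$, contradicting Lemma~\ref{lem:NoRainbow}.
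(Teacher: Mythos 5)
Your proof is correct and follows essentially the same route as the paper: both arguments pick two vertices from each of two pairwise intersections and one from the third (your choice is the paper's up to relabeling the colors), and read off one monochromatic triple of each color inside the corresponding $X_i$, with the empty triple intersection guaranteeing the five vertices are distinct.
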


\begin{proof}[Proof of Lemma~\ref{lem:disk-Rainbow}]
Since $\mathcal D$ is a disk, it contains $X_1$, $X_2$ and $X_3$ colored $c_1$, $c_2$ and $c_3$, respectively. Let $v_1,v_2\in V(X_1)\cap V(X_2)$, $v_3,v_4\in V(X_2)\cap V(X_3)$ and $v_5\in V(X_1)\cap V(X_3)$. Then the edge $\{v_1,v_2,v_5\}$ is of color $c_1$, the edge $\{v_1,v_2,v_3\}$ is of color $c_2$, and the edge $\{v_3,v_4,v_5\}$ is of color $c_3$.  Thus the complete 3-uniform hypergraph induced on $\{v_1, \ldots, v_5\}$ uses all three colors.  \end{proof}

\begin{proof}[Proof of Theorem~\ref{thm:main1}]

Let $\mathcal H$ be any edge-colored $K_{12}^3$ which does not contain a perfect matching using at most two colors.  By Lemma~\ref{lem:disk}, $\mathcal H$ is a disk.  Thus by Lemma~\ref{lem:disk-Rainbow} $\mathcal H$ contains a $K_{5}^{3}$ using all three colors.  This is a contradiction to Lemmas~\ref{lem:NoRainbow}, thus no such $H$ exists. This proves Theorem~\ref{thm:main1}
\end{proof}

\begin{thm}
\label{thm:main2}
In every edge-coloring of a $3$-uniform hypergraph $K_{16}^3$ on $16$ vertices in three colors, there is a $2$-colored matching of size $5$.
\end{thm}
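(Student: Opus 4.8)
The plan is to derive Theorem~\ref{thm:main2} from Theorem~\ref{thm:main1} by a short reduction: trade a matching of size $5$ on the $16$ vertices for a perfect $2$-colored matching on a $12$-vertex subset together with one further edge drawn from the four leftover vertices. So suppose for contradiction that $\mathcal H$ is a $3$-edge-coloring of $K_{16}^3$ with no $2$-colored matching of size $5$. For an arbitrary $12$-element vertex set $S\subseteq V(\mathcal H)$, the induced coloring is a $3$-edge-coloring of $K_{12}^3$, so by Theorem~\ref{thm:main1} it has a perfect $2$-colored matching $M_S$, i.e.\ four pairwise disjoint triples covering $S$ and using a color set $C_S$ with $|C_S|\le 2$. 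Write $T:=V(\mathcal H)\setminus S$, so $|T|=4$ and $T$ contains $\binom 43=4$ triples. If $|C_S|=1$ for some such $S$, then adjoining any single triple $f\subseteq T$ produces a matching $M_S\cup\{f\}$ that is disjoint (since $f$ misses $S$), has size $5$, and uses at most two colors, a contradiction; hence we may assume $|C_S|=2$ for every $12$-set $S$.

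The next step is to show that \emph{every} $4$-element subset $T$ of $V(\mathcal H)$ is monochromatic, i.e.\ all four triples inside $T$ receive the same color. Indeed, put $S:=V(\mathcal H)\setminus T$ and take $M_S$ as above with $|C_S|=2$. For any triple $f\subseteq T$ the set $M_S\cup\{f\}$ is again a matching of size $5$; if the color of $f$ lay in $C_S$ this matching would be $2$-colored, contradicting our assumption. So every triple inside $T$ carries the unique color not in $C_S$, and in particular they all share that one color.

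Finally I would globalize. Any two triples $e,e'$ with $|e\cap e'|=2$ lie together inside the $4$-set $e\cup e'$ and hence get the same color; since the graph on all $3$-subsets of a $16$-element set with adjacency ``intersection of size $2$'' is connected, it follows that every triple of $\mathcal H$ receives a single color. But a monochromatic $K_{16}^3$ obviously contains five pairwise disjoint edges of that color, which is a ($1$-, hence $2$-) colored matching of size $5$ --- the desired contradiction, so no such $\mathcal H$ exists.

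I do not expect a genuine obstacle once Theorem~\ref{thm:main1} is available: the only facts used beyond it are that any two $2$-element subsets of a $3$-element set intersect (so $C_S$ omits exactly one color, which must then color all of $T$) and the connectivity of the triple-intersection graph, both immediate; the one point needing a moment's care is the bookkeeping that $M_S$ and the added triple $f$ are truly vertex-disjoint and that $16=5\cdot 3+1$ leaves precisely a $4$-set over. An attempt to bypass Theorem~\ref{thm:main1} --- for instance by extending the size-$3$ monochromatic matching that Theorem~\ref{thm:AFL} furnishes in $K_{16}^3$ across the leftover $7$ vertices --- would run straight into the kind of local color analysis on small vertex sets ($B$-sets, $B^+$-sets, etc.) that occupies the bulk of this paper, so routing through Theorem~\ref{thm:main1} is the economical choice.
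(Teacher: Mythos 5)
Your proof is correct, but it takes a genuinely different route from the paper. You invoke Theorem~\ref{thm:main1} on every $12$-vertex subset $S$: the perfect $2$-colored matching $M_S$ it provides must use exactly two colors (otherwise any triple from the complementary $4$-set extends it to a $2$-colored matching of size $5$), and then every triple inside the $4$-set $V(\mathcal H)\setminus S$ is forced to carry the one missing color, so every $4$-set is monochromatic; by connectivity of the ``intersect in two vertices'' graph on triples, the whole coloring of $K_{16}^3$ would be monochromatic, which trivially yields a one-colored (hence $2$-colored) matching of size $5$ --- contradiction. All steps check out, including the bookkeeping ($16=4\cdot 3+4$) and the connectivity claim. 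The paper, by contrast, does not use Theorem~\ref{thm:main1} at all: it applies Theorem~\ref{thm:AFL} with $r=t=k=3$ (since $16\ge 13$) to get a monochromatic matching $M_1$ of size $3$, say in $c_1$; if the remaining $7$ vertices span a $c_1$-edge, that edge together with $M_1$ and an arbitrary edge on the last $4$ vertices finishes, and otherwise the $7$ leftover vertices use only two colors, so Lemma~\ref{lem:NoB+-set} gives a monochromatic pair of disjoint edges $M_2$ there, and $M_1\cup M_2$ is the desired matching. The trade-off: your argument is a clean reduction but rests on the paper's hardest result, whereas the paper's proof needs only Theorem~\ref{thm:AFL} plus the easy Lemma~\ref{lem:NoB+-set} and the same light argument carries over directly to Theorem~\ref{thm:main3}; on the other hand, your route shows rather strikingly that any counterexample on $16$ vertices would have to be entirely monochromatic.
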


\begin{proof}[Proof of Theorem~\ref{thm:main2}]
Let $\chi$ be a given edge-coloring of $\mathcal H=K_{16}^3$ by $3$ colors.
 Set $r=3$, $t=3$ and $k=3$. Then $16\ge 13=(t-1)(k-1)+kr$ implying $\mathcal H$ contains  matching $M_1$ of size $3$ that is monochromatic, say in $c_1$.
 If the induced subgraph$\mathcal H'$ on $V(\mathcal H)\setminus V(M_1)$ contains a $c_1$ edge $e$, then $M_1\cup e$ together with any edge from
 the remaining $4$ vertices form a matching of size $5$ colored with two colors, and we are done.
 So we may assume that there is no $c_1$-colored edge in~$\mathcal H'$.

By Lemma~\ref{lem:NoB+-set}, $V(\mathcal H')$ is not a $B^+$-set
and thus contains a monochromatic matching $M_2$. Then $M=M_1\cup
M_2$ leads to the desired 2-colored matching of size $5$.
\end{proof}

\begin{thm}\label{thm:main3}
 In every 3-edge-coloring of a 3-uniform hypergraph $K_{19}^3$ on 19 vertices, there is a 2-colored matching of size 6.
\end{thm}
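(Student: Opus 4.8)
The plan is to reproduce, one notch higher, the argument used for Theorem~\ref{thm:main2}: peel off a large monochromatic matching furnished by the Alon--Frankl--Lov\'asz theorem and then analyse the handful of leftover vertices via the no-$B^+$-set lemma. Concretely, I would apply Theorem~\ref{thm:AFL} with $r=t=3$ and $k=4$; since $19\ge 18=(t-1)(k-1)+kr$, the hypergraph $\mathcal H=K_{19}^3$ contains a monochromatic matching of size at least $4$, and I fix a sub-matching $M_1$ of size exactly $4$, say in colour $c_1$, spanning $12$ vertices. Let $\mathcal H'$ be the sub-hypergraph induced on the remaining $7$ vertices $V(\mathcal H)\setminus V(M_1)$.

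Next I split into two cases. If $\mathcal H'$ contains a $c_1$-edge $e$, then $M_1\cup\{e\}$ is a monochromatic matching of size $5$ spanning $15$ vertices, and taking any edge $f$ of the complete $3$-uniform hypergraph on the $4$ remaining vertices yields the matching $M_1\cup\{e,f\}$ of size $6$ using only the colours $c_1$ and $\chi(f)$, which is the desired $2$-coloured matching. Otherwise, $\mathcal H'$ is a $K_7^3$ whose edges use only $c_2$ and $c_3$. By Lemma~\ref{lem:NoB+-set}, $V(\mathcal H')$ is not a $B^+$-set, so some $6$-element subset $S\subseteq V(\mathcal H')$ is not a $B$-set. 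As $S$ automatically avoids $c_1$, the only way $S$ can fail the $B$-set conditions is by containing a pair of disjoint edges of the same colour, i.e.\ a monochromatic matching $M_2$ of size $2$, say in $c_2$. Then $M_1\cup M_2$ is a matching of size $6$ in the two colours $c_1$ and $c_2$, and we are done.

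The only places that need care are bookkeeping. One must check that the leftover set has \emph{exactly} $7$ vertices, so that Lemma~\ref{lem:NoB+-set} is applicable; this is exactly where $n=19$ rather than $n=18$ is used, just as $n=16$ was needed for Theorem~\ref{thm:main2}. One must also note, as in the proof of Theorem~\ref{thm:main2}, that ``$V(\mathcal H')$ is not a $B^+$-set'' really does hand us a monochromatic matching of size $2$: a $6$-subset of $\mathcal H'$ that is a $B$-set can only avoid the colour $c_1$ (avoiding $c_2$ or $c_3$ would make it monochromatic in the remaining colour, hence force a monochromatic disjoint pair), so ``all $6$-subsets are $B$-sets'' would already make $V(\mathcal H')$ a $B^+$-set. (Implicit here, and already used in Theorem~\ref{thm:main2}, is that the proof of Lemma~\ref{lem:NoB+-set} is purely local: it uses only the induced colouring on the seven vertices of the putative $B^+$-set, not any global matching hypothesis.) I do not expect any substantial obstacle beyond this routine bookkeeping.
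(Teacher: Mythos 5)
Your proposal is correct and is essentially the paper's own argument, just written out in full: the paper's (very terse) proof likewise applies Theorem~\ref{thm:AFL} with $k=4$ to extract a monochromatic matching of size $4$ and then, on the remaining $7$ vertices, either finds an edge of the same colour (and pads with any edge of the last $4$ vertices) or invokes Lemma~\ref{lem:NoB+-set} to get a monochromatic pair of disjoint edges. Your added bookkeeping --- the exact count of leftover vertices, the observation that a $B$-set $6$-subset of the leftover $K_7^3$ can only avoid $c_1$, and the remark that Lemma~\ref{lem:NoB+-set} is a purely local statement --- matches how the paper already uses these facts in Theorem~\ref{thm:main2}.
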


\begin{proof}[Proof of Theorem~\ref{thm:main3}]
 This proof follows the proof of Theorem~\ref{thm:main2}. Theorem~\ref{thm:AFL} leads to a monochromatic matching $M$ of size $4$ and in the reaming $7$ vertices either we find an edge of the same color as $M$ or a monochromatic pair of edges.
\end{proof}

\thebibliography{llll}

\bibitem{AFL}{N.~Alon, P.~Frankl, L.~Lov\'asz, The chromatic number of Kneser hypergraphs, \emph{Transactions of the American Mathematical Society} 298 (1986), pp. 359-370.}
\bibitem{ERD}{P. Erd\H{o}s, Problems and results in combinatorial
analysis, Colloq. Internat. Theor. Combin. Rome 1973, Acad. Naz.
Lincei, Rome (1976), pp. 3-17.}
\bibitem{GSS}{A. Gy\'arf\'as, G. S\'ark\"ozy, S. Selkow, Coverings by few monochromatic pieces - a transition between two Ramsey problems,
submitted.}
\bibitem{GY}{A. Gy\'arfas, Large matchings with few colors, \emph{Booklet of First Emlektabla
Workshop} 1 (2010), pp. 4-7.}

\end{document}